\newtheorem{theorem}{Theorem}[section]
\newtheorem{corollary}[theorem]{Corollary}
\newtheorem{lemma}[theorem]{Lemma}
\theoremstyle{definition}
\numberwithin{equation}{section}
\begin{document}

\title{A new form of the Hahn - Banach Theorem}

\author{ Sokol Bush Kaliaj }

\address{
Mathematics Department, 
Science Natural Faculty, 
University of Elbasan,
Elbasan, 
Albania.
}

\email{sokol\_bush@yahoo.co.uk}

\thanks{}

\subjclass[2010]{Primary 46A22; Secondary 46A32 }

\keywords{Hahn-Banach theorem, sub-additive convex functionals.}

\begin{abstract}
In this paper,  
we present a new form of the Hahn - Banach Theorem in terms of the sub-additive convex functions. 
\end{abstract}

\maketitle

\section{Introduction and Preliminaries}

The Hahn-Banach theorem 
is one of the central results in functional analysis.  
This theorem originated from Hahn \cite{HAN} and Banach \cite{BAN} is of basic importance in the analysis
of problems concerning the existence of continuous linear functionals.
Its principal formulations are 
as a dominated extension theorem (analytic form) and as a separation theorem (geometric form), 
c.f. \cite[Theorem II, 3.2]{SCH} and \cite[Theorem II, 3.1]{SCH}.  
It is known that  \cite[Theorem II, 3.2]{SCH} and \cite[Theorem II, 3.1]{SCH} imply each other.

Throughout this paper, $L$ denotes a vector space over $\Phi$. 
The scalar field $\Phi$ is the real field $\mathbb{R}$ or the complex field $\mathbb{C}$. 
We use the terminologies in the book \cite{SCH}. 
A function $p : L \to \mathbb{R}$  is said to be a \textit{sub-linear function} if  
for every $x, y \in L$ and for every $\lambda \geq 0$, we have
\begin{itemize}
\item
$p(x + y ) \leq p(x) + p(y)$,
\item
$p(\lambda x) = \lambda p(x)$. 
\end{itemize}
The function $p$ is said to be a \textit{semi-norm} if 
$p$ is a sub-linear function such that   
for every $x \in L$ and for every  $\lambda \in \Phi$, we have
$$
p(\lambda x) = |\lambda| p(x). 
$$
A function $\varphi : L \to \mathbb{R}$ is said to be a 
\textit{convex function} if for every $x,y \in L$ and for every $\lambda \in \mathbb{R}$, 
we have
$$
0< \lambda < 1 \Rightarrow 
\varphi(\lambda x + (1-\lambda) y ) \leq 
\lambda \varphi(x) + (1-\lambda) \varphi(y).
$$ 
We say that $\varphi$ is a \textit{sub-additive function} if  
$$
\varphi(x+y) \leq \varphi(x) +\varphi(y), \text{ for all }x, y \in L, 
$$ 
and $\varphi(\theta) =0$, where $\theta$ is the zero vector.   
The function $\varphi$ is called a \textit{sub-additive convex function} if  
$\varphi$ is a sub-additive and convex function.

Let $L$ be a topological vector space with $0$-neighborhood base $\mathfrak{B}$ 
and  let $\varphi : L \to \mathbb{R}$  be a convex function. 
The function $\varphi$ is said to be \textit{upper semi-continuous} at $x_{0} \in L$ 
if given $\varepsilon >0$ there exists a neighborhood $U_{\varepsilon} \in \mathfrak{B}$ 
such that 
$$
x \in x_{0} + U_{\varepsilon} \Rightarrow \varphi(x) < \varphi(x_{0}) + \varepsilon;
$$
$\varphi$ is said to be upper semi-continuous on a subset $G \subset L$ 
if for each $x \in G$, $\varphi$ is upper semi-continuous at $x$.

The following version of the analytic Hahn-Banach theorem 
first appears in Banach \cite{BAN} (see also \cite[Theorem II.3.10]{DUN}). 
\begin{theorem}\label{T_BAN}
Let $\mathfrak{Y}$ be a vector subspace of the real vector space $\mathfrak{X}$ 
and let $p : \mathfrak{X} \to \mathbb{R}$ be a sub-linear function. 
Let $f:\mathfrak{Y} \to \mathbb{R}$ be a linear function with
$$
f(x) \leq p(x), \quad x \in \mathfrak{Y}.
$$
Then there exists a linear function $F:\mathfrak{X} \to \mathbb{R}$ for which 
$$
F(x) =f(x),\quad x \in \mathfrak{Y}; 
\qquad
F(x) \leq p(x),\quad x \in \mathfrak{X}.
$$
\end{theorem}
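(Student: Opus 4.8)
The plan is to follow the classical two-stage argument: first establish a one-dimensional extension step, and then invoke Zorn's lemma to pass to a maximal extension, which is forced to be defined on all of $\mathfrak{X}$.

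First I would prove the one-step lemma: if $\mathfrak{Z}$ is a subspace with $\mathfrak{Y} \subseteq \mathfrak{Z} \subsetneq \mathfrak{X}$ and $g : \mathfrak{Z} \to \mathbb{R}$ is linear with $g \leq p$ on $\mathfrak{Z}$, and $x_{0} \in \mathfrak{X} \setminus \mathfrak{Z}$, then $g$ admits a linear extension $\tilde{g}$ to $\mathfrak{Z} + \mathbb{R} x_{0}$ still dominated by $p$. Any linear extension to $\mathfrak{Z} + \mathbb{R} x_{0}$ has the form $\tilde{g}(z + t x_{0}) = g(z) + t\alpha$ for a single real constant $\alpha = \tilde{g}(x_{0})$. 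Separating the cases $t > 0$ and $t < 0$ and using the positive homogeneity of $p$, the requirement $\tilde{g} \leq p$ reduces to finding $\alpha$ with
$$
\sup_{w \in \mathfrak{Z}} \big( g(w) - p(w - x_{0}) \big) \;\leq\; \alpha \;\leq\; \inf_{z \in \mathfrak{Z}} \big( p(z + x_{0}) - g(z) \big).
$$
Such an $\alpha$ exists precisely when the left-hand supremum does not exceed the right-hand infimum, i.e.\ when $g(w) + g(z) \leq p(w - x_{0}) + p(z + x_{0})$ for all $w, z \in \mathfrak{Z}$; and this follows from linearity of $g$, the hypothesis $g \leq p$, and the sub-additivity of $p$, since $g(w) + g(z) = g(w + z) \leq p\big((w - x_{0}) + (z + x_{0})\big) \leq p(w - x_{0}) + p(z + x_{0})$.

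Next I would apply Zorn's lemma to the poset $\mathcal{P}$ of all pairs $(\mathfrak{Z}, g)$ where $\mathfrak{Z}$ is a subspace of $\mathfrak{X}$ containing $\mathfrak{Y}$, $g : \mathfrak{Z} \to \mathbb{R}$ is linear, $g|_{\mathfrak{Y}} = f$, and $g(x) \leq p(x)$ for every $x \in \mathfrak{Z}$, ordered by $(\mathfrak{Z}_{1}, g_{1}) \preceq (\mathfrak{Z}_{2}, g_{2})$ iff $\mathfrak{Z}_{1} \subseteq \mathfrak{Z}_{2}$ and $g_{2}$ extends $g_{1}$. The set $\mathcal{P}$ is nonempty since $(\mathfrak{Y}, f) \in \mathcal{P}$, and the union over a chain yields an upper bound in $\mathcal{P}$ (the union of the subspaces is again a subspace, and the common extension of the functionals is well defined, linear, and still dominated by $p$). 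Let $(\mathfrak{Z}_{0}, g_{0})$ be a maximal element. If $\mathfrak{Z}_{0} \neq \mathfrak{X}$, choose $x_{0} \in \mathfrak{X} \setminus \mathfrak{Z}_{0}$ and apply the one-step lemma to produce a strictly larger element of $\mathcal{P}$, contradicting maximality; hence $\mathfrak{Z}_{0} = \mathfrak{X}$ and $F := g_{0}$ is the desired functional.

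The main obstacle is the one-step lemma, and within it the verification that the supremum $\sup_{w}\big(g(w) - p(w - x_{0})\big)$ is bounded above by the infimum $\inf_{z}\big(p(z + x_{0}) - g(z)\big)$, so that a legitimate value of $\alpha$ exists; the remaining ingredients (chain-completeness of $\mathcal{P}$, the contradiction with maximality) are routine. I note that only sub-additivity and non-negative homogeneity of $p$ enter, and that the argument is purely algebraic, with no topology or continuity involved — consistent with the fact that Theorem~\ref{T_BAN} is valid for an arbitrary real vector space $\mathfrak{X}$.
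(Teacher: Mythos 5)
Your proof is correct: the one-step extension lemma is verified by exactly the right inequality ($g(w)+g(z)=g(w+z)\le p\bigl((w-x_{0})+(z+x_{0})\bigr)\le p(w-x_{0})+p(z+x_{0})$, which puts the supremum below the infimum and leaves room for $\alpha$), and the Zorn's lemma argument on the poset of dominated extensions is the standard, correct way to finish. However, this is a genuinely different route from the one the paper takes. The paper does not prove Theorem~\ref{T_BAN} directly at all: it states it as Banach's classical result and then observes that it follows from Theorem~\ref{T_Main}, since every sub-linear function is sub-additive and convex. Theorem~\ref{T_Main} in turn is proved by a topological-geometric argument: the sublevel sets $U_{n}=\{x: \varphi(x)<1/n\}$ are used to build a filter base of radial, circled sets defining a vector topology on $L$ under which $\varphi$ is continuous (Lemmas~\ref{L.1} and~\ref{L.2}), and then the geometric Hahn--Banach separation theorem is applied in $L\times\mathbb{R}$ to separate the affine hyperplane $H=\{(x,t)\in M\times\mathbb{R}: g(x)-t=1\}$ from the open convex set $G=\{(x,t): \varphi(x)-t<1\}$; the separating hyperplane yields the extension. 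Your approach is purely algebraic and self-contained (modulo Zorn's lemma), and it proves Theorem~\ref{T_BAN} without any appeal to topology or to the separation theorem; the paper's approach buys generality (domination by a sub-additive convex $\varphi$ rather than a sub-linear $p$) at the cost of presupposing the geometric form of Hahn--Banach, which is itself usually derived from the very extension argument you give. Both are valid; yours is the more elementary and is essentially the proof in Dunford--Schwartz that the paper cites.
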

In this paper, we present a new form of the analytic Hahn-Banach theorem in terms of 
the sub-additive convex functions, see Theorem \ref{T_Main}. 
Since each sub-linear function is sub-additive convex, 
Theorem \ref{T_Main} implies Theorem \ref{T_BAN}. 
It is also shown that  Theorem \ref{T_Main} implies \cite[Theorem II, 3.2]{SCH}, 
see Corollary \ref{C_Main}.

In the papers \cite{NAK1}, \cite{NAK2} and \cite{FOR} are presented generalizations of the Hahn - Banach theorem in terms of the convex functionals. 
There are also generalizations of the Hahn - Banach theorem in the papers 
\cite{HIR}, \cite{CAK}, \cite{PLE}, \cite{RAN} and \cite{SIM}. 
Kakutani \cite{KUK} gave a proof of the analytic Hahn-Banach theorem by
using the Markov-Kakutani fixed-point theorem.

\section{The Main Result}

The main result is Theorem \ref{T_Main}. 
Let us start with two auxiliary lemmas.

\begin{lemma}\label{L.1} 
Let $L$ be a topological vector space over $\mathbb{R}$, 
let $\mathfrak{B}$ be the family of all circled $0$-neighborhood and  
let $\varphi : L \to \mathbb{R}$  be a convex function. 
If $G$ is a non-empty, convex, open subset of $L$ on which $\varphi$ is bounded above, 
then $\varphi$ is upper semi-continuous on $G$. 
\end{lemma}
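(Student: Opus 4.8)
The plan is to exploit convexity to bound $\varphi$ near a point $x_{0}\in G$ by a convex combination of $\varphi(x_{0})$ and the uniform upper bound on $G$. Write $M=\sup_{x\in G}\varphi(x)<\infty$, and fix $x_{0}\in G$ together with $\varepsilon>0$. Since $G$ is open and every topological vector space possesses a base of circled $0$-neighborhoods, I would first choose $W\in\mathfrak{B}$ with $x_{0}+W\subseteq G$. The algebraic heart of the argument is the identity
$$
x_{0}+\lambda w=(1-\lambda)x_{0}+\lambda(x_{0}+w),\qquad 0<\lambda<1,\ w\in W,
$$
from which convexity of $\varphi$, combined with $x_{0}\in G$ and $x_{0}+w\in G$, yields
$$
\varphi(x_{0}+\lambda w)\leq(1-\lambda)\varphi(x_{0})+\lambda\varphi(x_{0}+w)\leq(1-\lambda)\varphi(x_{0})+\lambda M,
$$
and hence $\varphi(x_{0}+\lambda w)-\varphi(x_{0})\leq\lambda\bigl(M-\varphi(x_{0})\bigr)$.

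Next I would select the scalar $\lambda$: pick any $\lambda\in(0,1)$ small enough that $\lambda\bigl(M-\varphi(x_{0})\bigr)<\varepsilon$, which is possible regardless of the sign of $M-\varphi(x_{0})$, and put $U_{\varepsilon}=\lambda W$. One checks that $U_{\varepsilon}\in\mathfrak{B}$: since $\lambda\neq0$, scalar multiplication by $\lambda$ is a homeomorphism of $L$, so $\lambda W$ is again a $0$-neighborhood, and $\mu(\lambda W)=\lambda(\mu W)\subseteq\lambda W$ whenever $|\mu|\leq1$, so $\lambda W$ is circled. Now every $x\in x_{0}+U_{\varepsilon}$ is of the form $x=x_{0}+\lambda w$ with $w\in W$, so the estimate of the previous paragraph gives $\varphi(x)<\varphi(x_{0})+\varepsilon$. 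As $x_{0}\in G$ and $\varepsilon>0$ were arbitrary, $\varphi$ is upper semi-continuous on $G$.

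I do not expect a genuine obstacle here: this is the classical fact that a convex function which is bounded above on a convex open set is locally upper semi-continuous there. The only points needing a little care are the verification that $\lambda W$ is again a circled $0$-neighborhood, so that it legitimately belongs to $\mathfrak{B}$, and the remark that a suitable $\lambda$ always exists, including in the degenerate case $M\leq\varphi(x_{0})$, where in fact any $\lambda\in(0,1)$ does the job.
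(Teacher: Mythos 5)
Your proof is correct and follows essentially the same route as the paper's: both rest on the identity $x_{0}+\lambda w=(1-\lambda)x_{0}+\lambda(x_{0}+w)$, the convexity estimate $\varphi(x_{0}+\lambda w)\leq\varphi(x_{0})+\lambda\bigl(M-\varphi(x_{0})\bigr)$, and a sufficiently small scalar $\lambda$ (the paper just writes an explicit formula for it). Your extra remarks --- that $\lambda W$ is again a circled $0$-neighborhood and that the degenerate case $M\leq\varphi(x_{0})$ is harmless --- are fine and, if anything, slightly more careful than the original.
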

\begin{proof}
Let $x_{0} \in G$ be and arbitrary vector and let  $0<\varepsilon <1$. 
Then by hypothesis there exists $M>0$ such that 
$$
\varphi(x) \leq M,\text{ for all } x \in G 
$$
and there exists $B \in \mathfrak{B}$ such that $x_{0} + B \subset G$. 
Thus, $\varphi(x) \leq M$ for all $x \in x_{0} + B$. 
If we choose 
$$
\eta = \frac{\varepsilon}{3(1+M)(1+|\varphi(x_{0})|)}
$$
then $\eta B \subset B$, since $B$ is circled. 
For every $u \in B$ the equality
$$
x_{0} + \eta u = \eta(x_{0} + u) + (1-\eta) x_{0}
$$
yields 
\begin{equation*}
\begin{split}
\varphi(x_{0} + \eta u) 
&\leq 
\eta \varphi(x_{0} + u) + (1-\eta)\varphi(x_{0}) \\
&=
\varphi(x_{0}) + \eta \varphi(x_{0} + u) - \eta\varphi(x_{0}) 
\leq \varphi(x_{0}) + \eta M - \eta\varphi(x_{0}) \\
&\leq 
\varphi(x_{0}) + \frac{\varepsilon M }{3(1+M)(1+|\varphi(x_{0})|)}  
+\frac{\varepsilon(-\varphi(x_{0}))}{3(1+M)(1+|\varphi(x_{0})|)}\\
&\leq \varphi(x_{0}) + \frac{\varepsilon}{3} + \frac{\varepsilon}{3} < 
\varphi(x_{0}) + \varepsilon.
\end{split}
\end{equation*}
Thus, for every $x \in x_{0} + \eta B$, we have
\begin{equation*}
\varphi(x) < \varphi(x_{0}) + \varepsilon.
\end{equation*}
This means that $\varphi$ is upper semi-continuous at $x _{0} \in G$, 
and since $x_{0}$ was arbitrary it follows that  
 $\varphi$ is upper semi-continuous on $G$ and the proof is finished.
\end{proof}

\begin{lemma}\label{L.2} 
Let $L$ be a topological vector space over $\mathbb{R}$, 
let $\mathfrak{B}$ be the family of all circled $0$-neighborhood and  
let $\varphi : L \to \mathbb{R}$  be a convex function. 
Then the following statements are equivalent:
\begin{itemize}
\item[(i)]  
$\varphi$ is continuous on $L$.
\item[(ii)]
$\varphi$ is upper semi-continuous on $L$.
\item[(iii)]
There exists a non-empty, convex, open subset of $L$ on which $\varphi$ is bounded above.
\end{itemize}
\end{lemma}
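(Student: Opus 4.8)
The plan is to prove the cyclic chain of implications $(i) \Rightarrow (ii) \Rightarrow (iii) \Rightarrow (i)$, which establishes the equivalence. The first implication $(i) \Rightarrow (ii)$ is immediate: continuity at a point trivially implies upper semi-continuity at that point, since the two-sided estimate $|\varphi(x) - \varphi(x_0)| < \varepsilon$ on a neighborhood yields in particular $\varphi(x) < \varphi(x_0) + \varepsilon$. For $(ii) \Rightarrow (iii)$, I would fix any point $x_0 \in L$, apply upper semi-continuity with $\varepsilon = 1$ to obtain a circled $0$-neighborhood $U \in \mathfrak{B}$ with $\varphi(x) < \varphi(x_0) + 1$ for all $x \in x_0 + U$; then $G = x_0 + U$ is a $0$-neighborhood translate — but I must ensure $G$ is open and convex. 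Since $\mathfrak{B}$ can be taken to consist of open, convex, circled $0$-neighborhoods (every topological vector space has a base of such neighborhoods), $G$ is a non-empty, convex, open set on which $\varphi$ is bounded above by $\varphi(x_0) + 1$.

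The substantial direction is $(iii) \Rightarrow (i)$, and this is where I expect the main work. Starting from a non-empty convex open set $G$ on which $\varphi$ is bounded above, Lemma \ref{L.1} immediately gives that $\varphi$ is upper semi-continuous on $G$. The task is then to upgrade this to \emph{continuity on all of $L$}. The plan has two stages. First, I would show $\varphi$ is also lower semi-continuous on $G$: given $x_0 \in G$ and $\varepsilon > 0$, choose a circled $0$-neighborhood $B$ with $x_0 \pm B \subset G$ (using that $G$ is open and $B$ circled, so $-B = B$), and for $u \in B$ write $x_0$ as a convex combination of $x_0 + \eta u$ and $x_0 - \eta u$, namely $x_0 = \tfrac12(x_0 + \eta u) + \tfrac12(x_0 - \eta u)$ — more precisely, using the already-established upper bound on $\varphi(x_0 - \eta u)$ from the upper semi-continuity estimate, convexity gives $\varphi(x_0) \le \tfrac12\varphi(x_0 + \eta u) + \tfrac12\varphi(x_0 - \eta u) \le \tfrac12\varphi(x_0 + \eta u) + \tfrac12(\varphi(x_0) + \varepsilon)$, which rearranges to $\varphi(x_0) - \varepsilon \le \varphi(x_0 + \eta u)$. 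Combining with upper semi-continuity yields continuity of $\varphi$ on $G$.

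The second stage is to propagate continuity from $G$ to every point of $L$. Fix any $y \in L$ and any $x_0 \in G$. Because $G$ is open, there is a circled $0$-neighborhood $B$ with $x_0 + B \subset G$ on which $\varphi$ is bounded above, say by $M$. Now consider the set $H = y + B$ (or a suitably scaled version); I want to find a convex open set containing $y$ on which $\varphi$ is bounded above, so that Lemma \ref{L.1} applies at $y$. The trick is to use a point $z$ on the line through $x_0$ and $y$ lying beyond $y$: write $y$ as a convex combination $y = \lambda x_1 + (1-\lambda) z$ where $x_1$ ranges over the open set $x_0 + B$; as $x_1$ varies, $y$ sweeps out an open neighborhood of $y$ (for fixed $\lambda \in (0,1)$ and fixed $z$, the map $x_1 \mapsto \lambda x_1 + (1-\lambda)z$ is a homeomorphism), and on that neighborhood $\varphi(\lambda x_1 + (1-\lambda)z) \le \lambda \varphi(x_1) + (1-\lambda)\varphi(z) \le \lambda M + (1-\lambda)\varphi(z)$, a finite bound. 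Hence $\varphi$ is bounded above on an open convex neighborhood of $y$, and Lemma \ref{L.1} gives upper semi-continuity at $y$; the lower semi-continuity argument from stage one (which only used convexity and a local upper bound near the point) then gives continuity at $y$. Since $y$ was arbitrary, $\varphi$ is continuous on $L$, closing the cycle. The main obstacle is organizing the geometry of stage two cleanly — ensuring the convex combination representation indeed produces an open neighborhood of $y$ and that all neighborhoods involved can be chosen convex, circled, and open simultaneously.
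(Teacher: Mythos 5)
Your overall architecture (a cycle $(i)\Rightarrow(ii)\Rightarrow(iii)\Rightarrow(i)$, with the midpoint-convexity trick to recover lower semi-continuity from upper semi-continuity, and an affine map pushing the upper bound from $G$ to an arbitrary point) matches the paper's proof in substance. But your step $(ii)\Rightarrow(iii)$ rests on a false premise: you take $G = x_0 + U$ and justify its convexity by asserting that ``every topological vector space has a base of open, convex, circled $0$-neighborhoods.'' That is precisely the definition of a \emph{locally convex} space, and it fails in general: in $L^p[0,1]$ with $0<p<1$ the only open convex sets are $\emptyset$ and the whole space, so no such base exists. Since the lemma is stated for an arbitrary topological vector space (and the main theorem applies it to a topology built from scratch out of $\varphi$, with no local convexity in hand), your argument does not produce a convex open set. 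The correct move, and the one the paper makes, is to use a sublevel set of $\varphi$ itself: $G=\{x\in L:\varphi(x)<c\}$ for suitable $c$ (e.g.\ $c=\varphi(x_0)+1$ so that $G\neq\emptyset$) is convex because $\varphi$ is convex, and open because $\varphi$ is upper semi-continuous; convexity comes from the function, not from the topology.

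A smaller unresolved point, which you yourself flag: in stage two of $(iii)\Rightarrow(i)$ you let $x_1$ range over $x_0+B$ with $B$ circled, and the image $y+\lambda B$ need not be convex, so Lemma \ref{L.1} does not directly apply to it. The fix is to let $x_1$ range over the convex open set $G$ itself: writing $y=\lambda x_0+(1-\lambda)z$ with $z$ beyond $y$ on the line through $x_0$ and $y$, the set $\lambda G+(1-\lambda)z$ is open, convex, contains $y$, and $\varphi\le\lambda M+(1-\lambda)\varphi(z)$ on it. This is exactly the paper's homeomorphism $h(y')=\tfrac{\rho-1}{\rho}y'+\tfrac{1}{\rho}w$ applied to $G$. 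With these two repairs your proof goes through; the first one, however, is a genuine logical gap as written, not just a presentational one.
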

\begin{proof}
$(i) \Leftrightarrow (ii)$ 
Clearly, if $\varphi$ is continuous on $L$, then $\varphi$ is upper semi-continuous on $L$. 
Conversely, assume that  $\varphi$ is upper semi-continuous at $x_{0} \in L$. 
Then, given $\varepsilon >0$ there exists $U_{\varepsilon} \in \mathfrak{B}$ such that 
\begin{equation}\label{eq_LDD.1}
u \in U_{\varepsilon} = -U_{\varepsilon}  
\Rightarrow 
\max \{\varphi(x_{0} + u), \varphi(x_{0} - u)\} < \varphi(x_{0}) + \varepsilon
\end{equation}
and since 
\begin{equation*}
2\varphi(x_{0}) \leq \varphi(x_{0} + u) + \varphi(x_{0} - u)
\end{equation*}
it follows that
\begin{equation}\label{eq_LDD.2} 
u \in U_{\varepsilon}  
\Rightarrow 2\varphi(x_{0}) \leq \varphi(x_{0} + u) + \varphi(x_{0} - u) 
< 
2\varphi(x_{0}) + 2\varepsilon. 
\end{equation}
The last result together with \eqref{eq_LDD.1} yields 
\begin{equation*}
u \in U_{\varepsilon}  
\Rightarrow  
\varphi(x_{0}) - \varepsilon < \varphi(x_{0} + u) < \varphi(x_{0}) + \varepsilon,
\end{equation*}
since $\varphi(x_{0} + u) \leq \varphi(x_{0}) - \varepsilon$ together with 
$\varphi(x_{0} - u) \leq \varphi(x_{0}) + \varepsilon$ yields 
$$
\varphi(x_{0} + u) + \varphi(x_{0} - u) < 2\varphi(x_{0})
$$
contradicting \eqref{eq_LDD.2}. 
This means that $\varphi$ is continuous at $x_{0}$.

$(ii) \Leftrightarrow (iii)$ 
Assume that $\varphi$ is upper semi-continuous on $L$ and let 
$G = \{x \in L : \varphi(x) < 1 \}$. 
It is easy to see that $G$ is non-empty convex subset of $L$. 
Let $x_{0} \in G$ be an arbitrary vector and let $\varepsilon = \frac{1-\varphi(x_{0})}{2}$. 
Then there exists $U_{\varepsilon} \in \mathfrak{B}$ such that
\begin{equation*}
\begin{split}
x \in x_{0} + U_{\varepsilon} \Rightarrow \varphi(x)< \varphi(x_{0})  + \varepsilon < 1 
\Rightarrow x \in G.
\end{split}
\end{equation*} 
Thus, $ x_{0} + U_{\varepsilon} \subset G$, 
and since $x_{0}$ was arbitrary it follows that $G$ is an open subset of $L$.

Conversely, assume that $G$ is a non-empty, convex, open subset of $L$ on which $\varphi$ is bounded above 
and let $x_{0} \in L$ be an arbitrary vector. 
Then, there exists $M>0$ such that 
$\varphi(x) \leq M$ for all $x \in G$. 
If $x_{0} \in G$, then by Lemma \ref{L.1} $\varphi$ is upper semi-continuous at $x_{0}$.

It remains to consider $x_{0} \not\in G$. 
Fix a vector $y_{0} \in G$ and a real positive number $\rho >1$ 
and define $w = y_{0} + \rho(x_{0} - y_{0})$.   
The function $h : L \to L$ defined by
\begin{equation*}
h(y) = \frac{\rho-1}{\rho}y + \frac{1}{\rho}w, \text{ for all }y \in L
\end{equation*}
is a homeomorphism, c.f. \cite[1.1, p.13]{SCH}. 
The function $h$ transforms $y_{0}$ into $x_{0}$ and $G$ into an open and convex set $h(G)$ 
containing $x_{0}$. 
For every $x \in h(G)$, we have 
$$
x = h(h^{-1}(x)) = \frac{\rho-1}{\rho}h^{-1}(x) + \frac{1}{\rho}w 
$$  
and  
\begin{equation*}
\begin{split}
\varphi(x) 
&= \varphi \left ( \frac{\rho-1}{\rho}h^{-1}(x) + \frac{1}{\rho}w \right ) 
\leq \frac{\rho-1}{\rho} \varphi(h^{-1}(x)) + \frac{1}{\rho} \varphi(w) \\
&\leq \frac{\rho-1}{\rho} M + \frac{1}{\rho} \varphi(w) = M'<+\infty.
\end{split}
\end{equation*}
Thus, $\varphi$ is bounded over $h(G)$. 
Therefore, by Lemma \ref{L.2}, $\varphi$ is upper semi-continuous at $x_{0} \in h(G)$. 
Since $x_{0}$ was arbitrary $\varphi$ is upper semi-continuous at every $x \in G$ and 
at every $x \not\in G$. 
Thus, $\varphi$ is upper semi-continuous on $L$, 
and this ends the proof. 
\end{proof}

We are now ready to present the main result.

\begin{theorem}\label{T_Main}  
Let $M$ be a subspace of a vector space $L$ over $\mathbb{R}$ and 
let $\varphi : L \to \mathbb{R}$  be a sub-additive convex function. 
If $g: M \to \mathbb{R}$ is a linear function such that 
$g$ "dominated" by $\varphi$, i.e.,  
$$
(\forall x \in M)[g(x) \leq \varphi(x)],
$$
then there exists a linear function $f: L \to \mathbb{R}$ such that
\begin{itemize}
\item[(i)]
$f$ extends $g$ to $L$, i.e., 
$$
f \vert_{M} = g,
$$
\item[(ii)]
$f$ "dominated" by $\varphi$, i.e., 
$$
(\forall x \in L)[f(x) \leq \varphi(x)].
$$
\end{itemize}
\end{theorem}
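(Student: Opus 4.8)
The plan is to reduce Theorem~\ref{T_Main} to the classical analytic Hahn--Banach theorem (Theorem~\ref{T_BAN}). The observation I would try to pin down first is that the hypotheses already force $\varphi$ to be sub-linear; once that is in hand, Theorem~\ref{T_BAN} applies verbatim with $\mathfrak{X}=L$, $\mathfrak{Y}=M$, $p=\varphi$ and $f=g$, producing a linear $F:L\to\mathbb{R}$ with $F|_{M}=g$ and $F(x)\le\varphi(x)$ on $L$, which is exactly (i) and (ii).

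So the core step is: a sub-additive convex $\varphi:L\to\mathbb{R}$ with $\varphi(\theta)=0$ is positively homogeneous, i.e. $\varphi(\lambda x)=\lambda\varphi(x)$ for all $\lambda\ge 0$ and $x\in L$. I would argue this in four moves. First, restricting $\varphi$ to a line through the origin and using convexity together with $\varphi(\theta)=0$, the ``slope'' $t\mapsto\varphi(tx)/t$ is non-decreasing on $(0,\infty)$: for $0<s<t$, writing $sx=\tfrac{s}{t}(tx)+(1-\tfrac{s}{t})\theta$ and applying convexity gives $\varphi(sx)\le\tfrac{s}{t}\varphi(tx)$. Second, sub-additivity gives $\varphi(nx)\le n\varphi(x)$ for every positive integer $n$, while the monotonicity of the first step (with $t=n\ge 1$) gives the reverse inequality; hence $\varphi(nx)=n\varphi(x)$. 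Third, writing a positive rational as $p/q$ and applying the previous identity twice (once to $x$, once to $(p/q)x$) yields $\varphi(rx)=r\varphi(x)$ for all rational $r>0$. Fourth, for irrational $\lambda>0$ I would sandwich: choosing rationals $r_{1}<\lambda<r_{2}$ and using monotonicity of $t\mapsto\varphi(tx)/t$ gives $\varphi(x)=\varphi(r_{1}x)/r_{1}\le\varphi(\lambda x)/\lambda\le\varphi(r_{2}x)/r_{2}=\varphi(x)$, so $\varphi(\lambda x)=\lambda\varphi(x)$; the case $\lambda=0$ is immediate. Together with the assumed sub-additivity this shows $\varphi$ is sub-linear.

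With $\varphi$ sub-linear and $g(x)\le\varphi(x)$ on $M$, I would then simply invoke Theorem~\ref{T_BAN} to obtain the extension $f$, finishing the proof. As a fallback that avoids identifying $\varphi$ itself as sub-linear, one could instead set $p(x):=\inf_{\lambda>0}\varphi(\lambda x)/\lambda$ and verify directly that $p$ is sub-linear (the monotonicity of $\lambda\mapsto\varphi(\lambda x)/\lambda$ makes the relevant infima additive), real-valued (from $\varphi(\lambda x)\ge-\varphi(-\lambda x)\ge-\lambda\varphi(-x)$ for $0<\lambda\le 1$), dominated by $\varphi$ (take $\lambda=1$), and dominating $g$ on $M$ (since $g$ is linear, $g(x)=g(\lambda x)/\lambda\le\varphi(\lambda x)/\lambda$), and then apply Theorem~\ref{T_BAN} to $p$; this gives $f\le p\le\varphi$.

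The only real obstacle is the content of the second paragraph: recognising that sub-additivity, convexity and $\varphi(\theta)=0$ together already yield positive homogeneity, the passage from rational to arbitrary positive scalars (via monotonicity and density) being the one point requiring a short argument. Everything past that is a direct citation of Theorem~\ref{T_BAN}.
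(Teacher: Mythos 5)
Your proof is correct, and it takes a genuinely different---and considerably more elementary---route than the paper. The paper keeps $\varphi$ as it is and instead builds a vector topology on $L$ whose $0$-neighborhood base consists of the sets $V_n = U_n\cap(-U_n)$ with $U_n=\{x: \varphi(x)<1/n\}$, proves via two lemmas on convex functions that $\varphi$ is continuous for this topology, and then separates the open convex set $G=\{(x,t):\varphi(x)-t<1\}$ from the affine subspace $H=\{(x,t)\in M\times\mathbb{R}: g(x)-t=1\}$ in $L\times\mathbb{R}$ by a closed hyperplane, using the geometric Hahn--Banach theorem; the extension $f$ is then read off from the equation of that hyperplane. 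You instead observe that the hypotheses already force $\varphi$ to be positively homogeneous: since the paper's definition of sub-additivity builds in $\varphi(\theta)=0$, convexity gives the slope monotonicity $\varphi(sx)/s\le\varphi(tx)/t$ for $0<s<t$, which combined with $\varphi(nx)\le n\varphi(x)$ yields $\varphi(nx)=n\varphi(x)$, then $\varphi(rx)=r\varphi(x)$ for rational $r>0$, and finally $\varphi(\lambda x)=\lambda\varphi(x)$ for all $\lambda\ge 0$ by sandwiching with the monotone slope---no continuity or density-plus-limits argument beyond that is needed, and each step checks out. This makes the theorem an immediate corollary of Theorem \ref{T_BAN}, and, more pointedly, it shows that the class of sub-additive convex functions in the paper's sense coincides exactly with the class of sub-linear functions, so Theorem \ref{T_Main} is a restatement of Theorem \ref{T_BAN} rather than a proper generalization of it. What the paper's longer argument buys is a self-contained derivation of the analytic form from the geometric form (supporting the announced mutual implication of the three statements) together with the auxiliary topology induced by $\varphi$; what yours buys is brevity and the sharper structural insight that the hypothesis class has not actually been enlarged.
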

\begin{proof}
Define
$$
U_{n} = \left \{ x \in L : \varphi(x) < \frac{1}{n} \right \}, n \in \mathbb{N}
$$
and
\begin{equation*} 
\mathfrak{B} = 
\left \{ V_{n} \in 2^{L} : V_{n} = U_{n} \cap (-U_{n}), n \in \mathbb{N} \right \}, 
\end{equation*}
where
$$
-U_{n} = \left \{ -x \in L : \varphi(x) < \frac{1}{n} \right \} = \left \{ x \in L : \varphi(-x) < \frac{1}{n} \right \}.
$$
The family $\mathfrak{B}$ is a filter base in $L$, since 
$$
V_{n} \cap V_{m} \supset V_{m}, \quad m > n
$$
and $\theta  \in V_{n}$ for all $n \in \mathbb{N}$. 

Note that $(-1)V_{n} = V_{n}$ and $0 V_{n} = \{\theta \} \subset V_{n}$; 
given $0<\lambda <1$ or $-1<\lambda <0$, we have also
$$
\lambda V_{n} \subset V_{n}, 
$$
since
\begin{equation*}
\begin{split}
\varphi(x) 
&= \varphi \left ( \frac{\lambda}{\lambda} x \right ) 
= \varphi \left ( \lambda \frac{1}{\lambda} x  + (1 - \lambda) \theta \right ) 
\leq \lambda \varphi \left (  \frac{1}{\lambda} x \right ).
\end{split}
\end{equation*}
Thus, $V_{n}$ is circled. 
Let us show next that $V_{n}$ is radial. Since $V_{n}$ is circled, 
it is enough to show that given $x_{0} \in L$ there exists $\mu \in \mathbb{R}$ such that 
$x_{0} \in \mu V_{n}$.  
If $x_{0} \in V_{n}$ then $\mu=1$. 
Otherwise $x_{0} \not\in U_{n}$ or  $x_{0} \not\in -U_{n}$; 
$\varphi(x_{0}) \geq \frac{1}{n}$ or $\varphi(-x_{0}) \geq \frac{1}{n}$. 
There exists $\mu >1$ such that 
$\frac{1}{\mu}\varphi(x_{0}) < \frac{1}{n}$ and 
$\frac{1}{\mu}\varphi(-x_{0}) < \frac{1}{n}$. 
Since
\begin{equation*}
\begin{split}
\varphi \left ( \frac{1}{\mu}x_{0} \right ) 
=&  \varphi \left ( \frac{1}{\mu}x_{0} + \left ( 1-\frac{1}{\mu} \right )\theta \right ) 
\leq \frac{1}{\mu} \varphi(x_{0}) < \frac{1}{n},\\
\varphi \left ( -\frac{1}{\mu}x_{0} \right ) 
=&  \varphi \left ( \frac{1}{\mu}(-x_{0}) + \left ( 1-\frac{1}{\mu} \right )\theta \right ) 
\leq \frac{1}{\mu} \varphi(-x_{0}) < \frac{1}{n},
\end{split}
\end{equation*}
it follows that $x_{0} \in \mu U_{n} \cap \mu (-U_{n}) = \mu V_{n}$. 
Thus, $\mathfrak{B}$ is a filter base in $L$ such that for each $n \in \mathbb{N}$, we have
\begin{itemize}
\item[(a)]
$V_{2n} + V_{2n} \subset V_{n}$, since $\varphi$ is sub-additive function,
\item[(b)]
$V_{n}$ is radial and circled. 
\end{itemize} 
Therefore, by \cite[I, 2.1, p.15]{SCH} the family $\mathfrak{B}$ 
is a $0$-neighborhood base for a unique topology $\mathfrak{T}$ 
under which $L$ is a topological vector space.

Given any vector $x_{0} \in U_{1}$, there exists $n \in \mathbb{N}$ such that 
$\frac{1}{n} < 1 - \varphi(x_{0})$. Then, $x_{0} + V_{n} \subset U_{1}$. 
This means that $U_{1}$ is an open set, 
and since $U_{1}$ is also convex, by Lemma \ref{L.2} 
it follows that $\varphi$  is continuous on $L$.

We now consider $L \times \mathbb{R}$ as the product of two topological vector spaces. 
Note that if $T : L \times \mathbb{R} \to \mathbb{R}$ is a linear function, then 
\begin{equation*}
T(x,t) = h(x) + \alpha t, \text{ for all } (x,t) \in L \times \mathbb{R}, 
\end{equation*}
where 
$h(x) = T(x, 0)$ for all $x \in L$   
and, $\alpha = T(0, 1)$. 
Hence,
\begin{equation*}
H = \{ (x,t) \in M \times \mathbb{R} : g(x) - t = 1 \}
\end{equation*}
is a linear manifold in the subspace $M \times \mathbb{R}$.  
Since $(x,t) \to S(x,t) = g(x) - t$ is a non-zero linear functional,  
by \cite[I, 4.1]{SCH} 
$H$ is a hyperplane in $M \times \mathbb{R}$ 
and a linear manifold in $L \times \mathbb{R}$.

Since $\varphi$ is a continuous function on $L$ it follows that 
$$
G = \{ (x,t) \in L \times \mathbb{R} : \varphi(x) - t < 1 \}
$$
is an open set, 
and since $g(x) \leq \varphi(x)$ for $x \in M$ 
it follows that $G \cap H = \emptyset$;  
$G$ is also a non-empty convex set.
Therefore, by \cite[Theorem II, 3.1]{SCH} there exists a closed hyperplane 
$H_{1}$ in $L \times \mathbb{R}$ such that 
$H_{1} \supset H$ and $H_{1} \cap G = \emptyset$. 
Since $H_{1}$ is a hyperplane, $H_{1} = (x_{0}, t_{0}) + H_{0}$ 
where $H_{0}$ is a maximal subspace of $L \times \mathbb{R}$. 
If we choose $(x_{0}, t_{0}) \in H \subset H_{1} \cap M \times \mathbb{R}$, then 
$$
H_{1} \cap (M \times \mathbb{R}) = 
(x_{0}, t_{0}) + (H_{0} \cap (M \times \mathbb{R})), 
$$
and since
$$
H_{1} \cap (M \times \mathbb{R}) \neq (M \times \mathbb{R}),
\quad 
((0,0) \not\in H_{1}\text{ and }(0,0) \in (M \times \mathbb{R}) )
$$ 
it follows that 
$H_{1} \cap (M \times \mathbb{R})$ is a hyperplane in $M \times \mathbb{R}$. 
Further, $H_{1} \cap (M \times \mathbb{R}) \supset H$ 
implies
\begin{equation}\label{eq_Dom.1}
H_{1} \cap (M \times \mathbb{R}) = H =(x_{0}, t_{0}) + (H_{0} \cap (M \times \mathbb{R}));
\end{equation}
($M \times \mathbb{R}$ is the algebraic sum of subspaces 
$H_{0} \cap (M \times \mathbb{R})$ and 
$\{ \lambda (x_{0}, t_{0}) : \lambda \in \mathbb{R} \}$). 
By \cite[I,4.1]{SCH},  
there exits a non-zero linear functional $F:L \times \mathbb{R} \to \mathbb{R}$ such that 
$H_{1} = \{ (x,t) \in L \times \mathbb{R}: F(x,t) = 1 \}$. 
The least result together with \eqref{eq_Dom.1} yields  
$$
F(x,0) = S(x,0) = g(x), \text{ for all }x \in M;
$$
that is, the linear function $f(\cdot) = F(\cdot,0)$ 
is an extension of $g$ to $L$. 

It remains to prove that $f$ "dominated" by $\varphi$. Note that 
$$
F(x,t) = f(x) -t,\text{ for all }(x,t) \in L \times \mathbb{R}, 
$$
since $(0,-1) \in H \subset H_{1}$. 
By $H_{1} \cap G = \emptyset$ it follows that
\begin{equation}\label{eq_DD1}
f(x)= 1+t \leq \varphi(x), \text{ for all }(x,t) \in H_{1}.  
\end{equation}
While $(x,t) \not\in H_{1}$ implies that there exists a real number $r \neq 1$ such that 
$$
f(x) - t = r.
$$
Then, $(x, t + r -1 ) \in H_{1}$ and therefore \eqref{eq_DD1} yields
$$
f(x) = 1 + (t + r -1)  \leq \varphi(x).
$$
Thus, $f$ "dominated" by $\varphi$ and this ends the proof.
\end{proof}

\begin{corollary}\label{C_Main}
Let $M$ be a subspace of a vector space $L$ over $\mathbb{C}$ and 
let $p : L \to [0,+\infty)$  be a semi-norm. 
If $g: M \to \mathbb{C}$ is a linear functional such that 
$g$ "dominated" by $p$, i.e., 
$$
(\forall x \in M)[|g(x)| \leq p(x)],
$$
then there exists a linear functional $f: L \to \mathbb{C}$ such that
\begin{itemize}
\item
$f$ extends $g$ to $L$, i.e., 
$$
f \vert_{M} = g,
$$
\item
$f$ "dominated" by $p$, i.e., 
$$
(\forall x \in L)[|f(x)| \leq p(x)].
$$
\end{itemize}
\end{corollary}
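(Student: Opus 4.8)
The plan is to deduce the complex statement from the real Theorem \ref{T_Main} by the classical device of passing to real and imaginary parts. First I would regard $L$ and $M$ as vector spaces over $\mathbb{R}$ by restriction of scalars, and observe that the given semi-norm $p$ is in particular a sub-additive convex function on $L$ in the sense of this paper: sub-additivity and $p(\theta)=0$ are part of the definition of a semi-norm, while convexity follows from sub-additivity together with positive homogeneity, since $p(\lambda x+(1-\lambda)y)\le p(\lambda x)+p((1-\lambda)y)=\lambda p(x)+(1-\lambda)p(y)$ for $0<\lambda<1$.

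Next I would set $g_{1}=\operatorname{Re} g:M\to\mathbb{R}$. This is an $\mathbb{R}$-linear functional, and the $\mathbb{C}$-linearity of $g$ gives the reconstruction formula $g(x)=g_{1}(x)-ig_{1}(ix)$ for every $x\in M$. Moreover $g_{1}(x)\le|g_{1}(x)|\le|g(x)|\le p(x)$ on $M$, so $g_{1}$ is ``dominated'' by $p$ in the sense of Theorem \ref{T_Main}. Applying that theorem (with the real vector space $L$, the real subspace $M$, and the sub-additive convex function $p$) yields an $\mathbb{R}$-linear $f_{1}:L\to\mathbb{R}$ with $f_{1}\vert_{M}=g_{1}$ and $f_{1}(x)\le p(x)$ for all $x\in L$.

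Then I would define $f:L\to\mathbb{C}$ by $f(x)=f_{1}(x)-if_{1}(ix)$. One checks that $f$ is $\mathbb{C}$-linear: it is plainly additive and $\mathbb{R}$-homogeneous, and $f(ix)=f_{1}(ix)-if_{1}(-x)=f_{1}(ix)+if_{1}(x)=i\bigl(f_{1}(x)-if_{1}(ix)\bigr)=if(x)$, which upgrades $\mathbb{R}$-homogeneity to $\mathbb{C}$-homogeneity. Since $f_{1}$ extends $g_{1}=\operatorname{Re} g$ and $\operatorname{Re} f=f_{1}$ on $L$, the reconstruction formula applied to both $g$ and $f$ shows $f\vert_{M}=g$.

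Finally, for the domination $|f(x)|\le p(x)$, I would use the standard rotation trick: given $x\in L$, choose $\alpha\in\mathbb{C}$ with $|\alpha|=1$ and $\alpha f(x)=|f(x)|$ (any $\alpha$ works when $f(x)=0$, and $\alpha=\overline{f(x)}/|f(x)|$ otherwise). Then $|f(x)|=\alpha f(x)=f(\alpha x)$ is a nonnegative real number, hence equals its own real part, so $|f(x)|=\operatorname{Re} f(\alpha x)=f_{1}(\alpha x)\le p(\alpha x)=|\alpha|\,p(x)=p(x)$. This completes the argument. There is no genuine obstacle here; the only points requiring a moment's care are the verification that a semi-norm satisfies the hypotheses of Theorem \ref{T_Main} and the $\mathbb{C}$-linearity check for $f$, both of which are routine.
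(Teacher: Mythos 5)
Your proposal is correct and follows essentially the same route as the paper: decompose $g$ via its real part, apply Theorem \ref{T_Main} to the real restriction with the semi-norm $p$ viewed as a sub-additive convex function, reconstruct $f(x)=f_{1}(x)-if_{1}(ix)$, and finish with the rotation trick (the paper's $e^{i\theta(x)}$ argument). Your write-up is, if anything, slightly more explicit than the paper's in verifying that $p$ satisfies the hypotheses of Theorem \ref{T_Main} and that $f$ is $\mathbb{C}$-linear.
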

\begin{proof}
There exists a linear functional $g_{0} : M \to \mathbb{R}$ such that
\begin{equation*}
g(x) = g_{0}(x) - i g_{0}(ix), \text{ for all } x \in M. 
\end{equation*}
We have also that $p$ is a convex functional such that $p(\lambda x) = \lambda p(x)$ 
for every $\lambda \geq 0$ and $x \in L$. 
Thus, regarding $L$ as a real linear space, and applying Theorem \ref{T_Main}, 
a real linear function $f_{0} : L \to \mathbb{R}$ is obtained for which 
$f_{0}\vert_{M} = g_{0}$ and $f_{0}(x) \leq p(x)$ for all $x \in L$.
Let the function $f : L \to \mathbb{C}$ be defined by the 
equation 
\begin{equation*}
f(x) = f_{0}(x) - i f_{0}(i x).
\end{equation*}
Clearly, $f$ is a linear functional and
$$
f(x) = f_{0}(x) - i f_{0}(i x) = g_{0}(x) - i g_{0}(i x)= g(x), 
\text{ for all }x \in M.
$$ 
Thus $f$ is an extension of $g$. 
Finally, let $f(x) = |f(x)| e^{i \theta(x)}$, 
where $\theta(x)$ is an argument of the complex number $f(x)$; 
then
$$
|f(x)| = f_{0} \left ( \frac{x}{e^{i \theta(x)}} \right ) 
\leq 
p \left ( \frac{x}{e^{i \theta(x)}} \right ) = p(x),
$$ 
which proves that $|f( \cdot )| \leq p( \cdot )$ 
and this ends the proof.
\end{proof}

Since \cite[Theorem II, 3.1]{SCH} implies Theorem \ref{T_Main} and 
Theorem \ref{T_Main} implies  \cite[Theorem II, 3.2]{SCH}, 
we conclude that Theorem \ref{T_Main},  
\cite[Theorem II, 3.2]{SCH} and \cite[Theorem II, 3.1]{SCH} imply each other.

\bibliographystyle{plain}

\end{document}